\theoremstyle{plain}
\newtheorem{theorem}{Theorem}
\newtheorem{mainthm}{Theorem}
\newtheorem{lemma}[theorem]{Lemma}
\newtheorem{prop}[theorem]{Proposition}
\theoremstyle{definition}
\newtheorem{defn}{Definition}
\DeclareMathOperator{\Aut}{Aut}
\DeclareMathOperator{\St}{St}
\DeclareMathOperator{\Sym}{Sym}
\begin{document}

\title{MULTI-EGS-GROUPS: EXPONENT OF CONGRUENCE QUOTIENTS}

\author{Elena Maini}
\address{Department of Mathematics, University of Padova, 35121 Padova, Italy}
\email{elenamaini.06@gmail.com}

\maketitle

\begin{abstract}
    Given a multi-EGS-group $K$ acting on the $p$-adic rooted tree, where $p$ is any prime number, we compute the exponent of the congruence quotient $K_n= K/ \St_K(n)$ for all $n\ge 1$. The formula that we obtain for $\exp(K_n)$ only depends on $p$, $n$ and the periodicity of $K$.
\end{abstract}

\section{Introduction}
From the early 20th century, the so-called Burnside Problem has been an object of great interest, being formulated in different versions which led to various and elaborate solutions.
One of its versions,
known as the General Burnside Problem,
asks
whether a finitely generated periodic group has to be finite. Since the 1960s, several examples of infinite finitely generated periodic groups have been found: among them, some of the easiest turned out to be groups acting on regular rooted trees. As an evidence of this, we mention the Grigorchuk groups (defined in \cite{Grigorchuk}) and the Gupta-Sidki group (defined in \cite{GuptaSidki}).
The Gupta-Sidki group belongs to a broader family of subgroups of the group of automorphisms of the $p$-adic rooted tree $\mathcal{T}$ (where $p$ is a prime number), which will be the object of our paper: the family of the so-called multi-EGS-groups.

In order to define this family of groups, we start by recalling some basic notions from the theory of automorphisms of trees. We use the following notation: the vertices of $\mathcal{T}$ are the elements of the free monoid generated by $X=\{1,\dots,p\}$, that is, the words of finite length in the alphabet $X$ (the word of length zero, denoted by $\emptyset$, is said to be the root of $\mathcal{T}$ and is the only vertex that has no parent).
For $n\in \mathbb{N}$, we denote with $X^n$ the set of all the words of length $n$; the finite tree whose vertices are the words of length $\leq n$ is $\mathcal{T}_n$ and $\Aut\mathcal{T}_n \simeq \Aut\mathcal{T}/\St(n)$, where $\St(n)$ is the subgroup of $\Aut\mathcal{T}$ consisting of all the automorphisms that do not move the elements of $X^n$.
If $G$ is a subgroup of $\Aut\mathcal{T}$, we put $\St_{G}(n) = \St(n) \cap G$ and we call $G_n = G/\St_{G}(n)$ the \textit{nth congruence quotient} of $G$.
The sequence $\{ \St_{G}(n) \}_{n\ge 0}$ is a chain of finite-index normal subgroups of $G$, which makes $G$ into a residually finite group and gives rise to the sequence of finite groups $\{G_n\}_{n\ge 0}$.
Understanding properties of $G_n$ as $n$ varies is a typical way to understand the structure of the whole $G$. One can easily prove, for example, that
\begin{equation}
\label{EQlimiteG_n}
    \exp(G)= \lim_{n\rightarrow \infty} \exp(G_n).
\end{equation}
It is clear that, if $H\le G\le \Aut\mathcal{T}$, $H_n$ can be naturally embedded in $G_n$.
We will write $\St_{G_n}(k)$ for the quotient $\St_{G}(k)/\St_{G}(n)$, whenever $k$ is an integer less than or equal to $n$.

Now, for any $g\in \Aut\mathcal{T}$ and for any vertex $u$, we denote with $g_{(u)}$ the permutation of $X$ which satisfies $g(ux)=g(u)g_{(u)}(x)$ for any $x\in X$. We refer to $g_{(u)}$ as the \textit{label} of $g$ at $u$. The set of all labels of $g$ is the \textit{portrait} of $g$ and the automorphism $g$ is completely determined by its portrait. Moreover, for any vertex $u$, we write $g_u$ for the automorphism of $\Aut\mathcal{T}$ that has portrait given by ${(g_u)}_{(v)} = g_{(uv)}$ for any vertex $v$.
We call $g_u$ the \textit{section} of $g$ at $u$ and we say that a subgroup $G$ of $\Aut\mathcal{T}$ is \textit{self-similar} if, for every $g\in G$, all the sections of $g$ belong to $G$.
It is well known that
the map
\begin{align*}
    \psi\text{ }:\text{  } & \St(1)\text{ }\longrightarrow \text{ } \Aut\mathcal{T} \times \overset{p}{\cdots} \times \Aut\mathcal{T}
    \\
    & \hspace{0,5 cm} g \hspace{0,6 cm} \longmapsto \hspace{0,7 cm} (g_1,\dots,g_p)
\end{align*}
is a group isomorphism and clearly, if $G$ is self-similar, $\psi(\St_{G}(1)) \subseteq G \times \overset{p}{\cdots} \times G$.
From now onwards, we will denote by the letter $a$ the automorphism of $\mathcal{T}$ with portrait 
\begin{equation*}
    a_{(u)}=
    \begin{cases}
    (1\hspace{0,1 cm} 2 \hspace{0,1 cm} \dots \hspace{0,1 cm} p) \hspace{1 cm} & \text{ if } u \text{ is the root of } \mathcal{T} \\
    1 \hspace{1 cm} & \text{ otherwise.}
    \end{cases}
\end{equation*}
Notice that $a$ has order $p$ and then we will be allowed to write $a^e$ whenever $e$ is an element in $\mathbb{F}_p$.
It follows from an easy computation that for any $g\in \St(1)$ with $\psi(g) = (g_1,\dots,g_{p-1},g_p)$ we have
\begin{equation}
\label{EQconjugatebya}
    \psi(g^{a}) = (g_p,g_1,\dots,g_{p-1})
\end{equation}
and then
\begin{align}
\label{EQpth_powers}
\begin{split}
    \psi((a g)^p)&= \psi(g^{(a^{p-1})}) \psi(g^{(a^{p-2})}) \dots \psi(g^{(a^{})}) \psi(g)
    \\
    &= (\hspace{0,1 cm}(g_2 \dots g_{p-1} g_p g_1)\hspace{0,1 cm}, \hspace{0,1 cm}(g_3 \dots g_p g_1 g_2) \hspace{0,1 cm},\hspace{0,1 cm} \dots\hspace{0,1 cm}, \hspace{0,1 cm} (g_1 \dots g_{p-2} g_{p-1} g_{p})\hspace{0,1 cm}).
    \end{split}
\end{align}

\vspace{5 pt}

The isomorphism $\psi$, besides being typically used to understand the structure of groups acting on $\mathcal{T}$, is a useful tool to define recursively automorphisms. As an evidence of this, we give the following definition.
For any vector $\mathbf{e}=(e_1, \dots, e_{p-1})\in (\mathbb{F}_p)^{p-1}$ and for any $j\in X=\{1,\dots ,p\}$, we refer to the automorphism $b\in \St(1)$ defined by 
\begin{equation*}
    \psi(b) = (a^{e_{p-j+1}}, \dots , a^{e_{p-1}}, b, a^{e_1}, \dots, a^{e_{p-j}} )
\end{equation*}
as the \textit{directed automorphism along the path $P_j=(\emptyset, j, jj, \dots)$ with defining vector $\mathbf{e}$}. The use of this term is justified by the fact that the label of $b$ at a vertex $u$ can be non trivial only when the parent of $u$ lies in $P_j$.
We are then ready to say what we mean with the term multi-EGS-group.
\begin{defn}
\label{D1}
    Let $E^{(1)}, \dots, E^{(p)}$ be vector subspaces of $(\mathbb{F}_p)^{p-1}$ not all equal to the null subspace.
    For every $j\in \{1,\dots, p\}$ 
    we denote with $B^{(j)}$ the set of all directed automorphisms along $P_j$ with defining vector in $E^{(j)}$.
    Then we say that $K = \langle a, B^{(1)}, \dots, B^{(p)} \rangle $ is the \textit{multi-EGS-group associated to the tuple} $E=(E^{(1)},\dots , E^{(p)})$.
\end{defn}

We observe that, with the notation of the above definition, for every $j\in \{1,\dots,p\}$ there is a natural isomorphism from $E^{(j)}$ to $B^{(j)}$, which makes $B^{(j)}$ into an elementary abelian $p$-subgroup of $\Aut \mathcal{T}$. Such isomorphism is the map sending a vector $\mathbf{e}\in E^{(j)}$ to the directed automorphism (along $P_j$) defined by $\mathbf{e}$.
Let us remark, in addition, that
\begin{equation}
\label{EQgenerateK}
    K=\langle a, \mathcal{B}^{(1)}, \dots , \mathcal{B}^{(p)} \rangle
\end{equation}
whenever $\mathcal{E}^{(j)}$ is a generating set for $E^{(j)}$ and $\mathcal{B}^{(j)}$ is the set of all directed automorphisms along $P_j$ with defining vector in $\mathcal{E}^{(j)}$.
Every multi-EGS-group is then finitely generated.


Some subfamilies of multi-EGS-groups have played an important role in modern group theory and deserve a different name.
\begin{defn}
    Let $K$ a multi-EGS-group as in Definition \ref{D1}.

    If $E^{(1)},\dots, E^{(p-1)}$ are all equal to the null subspace, we say that $K$ is the \textit{multi-GGS-group with defining space} $E^{(p)}$.

    If $E^{(1)},\dots, E^{(p-1)}$ are all equal to the null subspace and $E^{(p)}$ has dimension $1$, we say that $K$ is the \textit{GGS-group with defining space} $E^{(p)}$. In this case, taken any non zero vector $\mathbf{e}\in E^{(p)}$, $K$ is also said to be the \textit{GGS-group with defining vector} $\mathbf{e}$ (this is consistent thanks to (\ref{EQgenerateK})).
\end{defn}
The aforementioned Gupta-Sidki group is the GGS-group obtained by choosing $p\ge 3$ and $\mathbf{e}=(1,-1,0,\dots,0)$.
Theorem 1 of \cite{Infinite} ensures that every GGS-group is infinite. With the same argument, one can prove that $\langle a,b \rangle $ is infinite whenever $b$ is a directed automorphism with non-zero defining vector. It readily follows that every multi-EGS-group is infinite.

Another version of the problem raised by Burnside, known as the Burnside Problem, asks whether a finitely generated group with finite exponent has to be finite. The answer is still negative, but examples of infinite finitely generated groups with finite exponent cannot be found among the subgroups of $\Aut \mathcal{T}$.
To motivate
this, we look at
a third version of the Burnside Problem, which goes by the name of Restricted Burnside Problem and reads: 
given a finite group with exponent $m$ and $d$ generators, is it possible to find a bound for the order of such group which only depends on $m$ and $d$?
In the early 1990s, Zelmanov showed, in two remarkable papers (\cite{ZelmanovODD} and \cite{Zelmanov2}), that the answer to the above question is affirmative.
As a consequence, every finitely generated residually finite group with finite exponent has to be finite.
Hence every infinite finitely generated subgroup $G$ of $\Aut \mathcal{T}$ (and in particular every multi-EGS-group) must have infinite exponent and, by (\ref{EQlimiteG_n}), the sequence $\exp(G_n)$ goes to infinity when $n$ goes to infinity.
The aim of this paper will be to determine how fast $\exp(G_n)$ goes to infinity when $G$ is a multi-EGS-group.
This goal will be achieved in Theorem \ref{mainthm}, which is our main result.

Since any multi-EGS-group is infinite and finitely generated, the multi-EGS-groups giving a negative answer to the General Burnside Problem are exactly the periodic ones. 
Moreover, by Lemma 3.13 in \cite{EGS}, we have that a multi-EGS-group $K$ is periodic if and only if, with the notation of Definition \ref{D1}, the subspaces ${E^{(1)}},\dots , {E^{(p)}}$ are all contained in the hyperplane of $(\mathbb{F}_{p})^{p-1}$ defined by $V= \{(f_1,\dots,f_{p-1})\in (\mathbb{F}_{p})^{p-1}:f_1 + \dots + f_{p-1}=0\}$.
Given a multi-EGS-group $K$, Theorem \ref{mainthm} provides an explicit formula for $\exp(K_n)$, which only depends on the periodicity of $K$.
\begin{mainthm}
\label{mainthm}
    Let $K$ be a multi-EGS-group. Then for every $n\ge 1$
    \begin{equation*}
    \exp(K_n)=
    \begin{cases}
        p^{\lfloor \frac{n+1}{2} \rfloor } &\text{ if } K \text{ is periodic}\\
        p^n &\text{ otherwise}
    \end{cases}
\end{equation*}
where $\lfloor \frac{n+1}{2} \rfloor$ is the greatest integer number less than or equal to $\frac{n+1}{2}$.
\end{mainthm}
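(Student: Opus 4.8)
\emph{Proof proposal.}
The plan is to write $\exp(K_n)=p^{e_n}$ and to compute the integer $e_n$: the goal becomes $e_n=\lfloor\frac{n+1}{2}\rfloor$ in the periodic case and $e_n=n$ otherwise. This makes sense because $a$ and every directed automorphism have all their labels in $\langle(1\,2\cdots p)\rangle$, so $K$ lies inside the iterated wreath product $\Gamma=C_p\wr C_p\wr\cdots$ and $K_n$ embeds into $\Gamma_n=C_p\wr\overset{n}{\cdots}\wr C_p$, a finite $p$-group. Since $\Gamma_n\cong\Gamma_{n-1}\wr C_p$ and $\exp(A\wr C_p)=p\exp(A)$ for every finite $p$-group $A$, induction gives $\exp(\Gamma_n)=p^n$, so $e_n\le n$ always --- which already settles the upper bound $\exp(K_n)\le p^n$ in the non-periodic case. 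I will use two further elementary facts, both immediate from self-similarity: for $g\in\St_K(1)$ with $\psi(g)=(g_1,\dots,g_p)$ one has $\operatorname{ord}(g\text{ in }K_n)=\max_i\operatorname{ord}(g_i\text{ in }K_{n-1})$, and for $g\notin\St_K(1)$ one has $\operatorname{ord}(g\text{ in }K_n)=p\cdot\operatorname{ord}(g^p\text{ in }K_n)$ for every $n\ge1$.

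For the lower bound when $K$ is not periodic, use the criterion recalled in the introduction to choose a directed generator $b$ along a path $P_j$ whose defining vector $\mathbf e$ has $\sigma:=e_1+\dots+e_{p-1}\neq0$, and put $t:=a^{\sigma}b\in K$. From $t^p=b^{a^{(p-1)\sigma}}b^{a^{(p-2)\sigma}}\cdots b^{a^{\sigma}}b$ and \eqref{EQconjugatebya}, the $j$-th coordinate of $\psi(t^p)$ is the ordered product of the sections $b_{j-l\sigma}$ over $l=p-1,\dots,0$; for $l\neq0$ each such section is a power of $a$ (these commute) and the corresponding exponents sum to $\sigma$, while for $l=0$ the section is $b_j=b$, so the product collapses to $a^{\sigma}b=t$. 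Hence $(t^p)_j=t$, and since $t\equiv a^{\sigma}\not\equiv1$ modulo $\St_K(1)$, the two facts above give $\operatorname{ord}(t\text{ in }K_n)=p\cdot\operatorname{ord}(t^p\text{ in }K_n)\ge p\cdot\operatorname{ord}(t\text{ in }K_{n-1})$, hence $\operatorname{ord}(t\text{ in }K_n)\ge p^n$ by induction. With the upper bound, $\exp(K_n)=p^n$.

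Now let $K$ be periodic. Let $e\colon K\to\mathbb{F}_p$ be the composite of $K\to K/\St_K(1)$ with the isomorphism $K/\St_K(1)\cong\langle(1\,2\cdots p)\rangle\cong\mathbb{F}_p$, and define $\phi\colon\St_K(1)\to\mathbb{F}_p$ by $\phi(g)=\sum_{i=1}^{p}e(g_i)$; as $\psi$ is a coordinatewise isomorphism on $\St(1)$, $\phi$ is a homomorphism. By Reidemeister--Schreier with transversal $\{1,a,\dots,a^{p-1}\}$, the group $\St_K(1)$ is generated by the conjugates $b^{a^k}$ of the directed generators $b$; conjugation by $a$ cyclically permutes the coordinates of $\psi$, so $\phi(b^{a^k})=\phi(b)=e_1+\dots+e_{p-1}$, and this vanishes because periodicity puts the defining vectors into $V$. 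Thus $\phi\equiv0$. Hence, if $g\notin\St_K(1)$ and $g=a^{e(g)}h$ with $h\in\St_K(1)$, every coordinate of $\psi(g^p)$ is a product of all of $h_1,\dots,h_p$ in some order, so is congruent to $a^{\phi(h)}=1$ modulo $\St_K(1)$; that is, $g^p\in\St_K(2)$. Combining this with the two facts above gives the recursion $e_n\le\max(e_{n-1},\,1+e_{n-2})$ for $n\ge2$, and since $e_0=0$, $e_1=1$, strong induction yields $e_n\le\lfloor\frac{n+1}{2}\rfloor$.

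For the matching lower bound in the periodic case it suffices to prove $e_n\ge 1+e_{n-2}$ for all $n\ge2$, because then strong induction (with $e_0=0$, $e_1=1$) gives $e_n\ge 1+\lfloor\frac{n-1}{2}\rfloor=\lfloor\frac{n+1}{2}\rfloor$. Fix $h\in K$ with $\operatorname{ord}(h\text{ in }K_{n-2})=\exp(K_{n-2})=p^{e_{n-2}}$. It then suffices to construct $t\in K\setminus\St_K(1)$ such that $t^p$ --- which lies in $\St_K(2)$ by the previous paragraph --- has a section at some level-$2$ vertex that is conjugate in $K$ to $h$; for then self-similarity gives $\operatorname{ord}(t\text{ in }K_n)=p\cdot\operatorname{ord}(t^p\text{ in }K_n)\ge p\cdot\operatorname{ord}(h\text{ in }K_{n-2})=p^{1+e_{n-2}}$, so $e_n\ge 1+e_{n-2}$. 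Producing such a $t$ is the step I expect to be the main obstacle: writing $t=au$ with $u\in\St_K(1)$, one needs the ordered product of the sections of $u$ that makes up a chosen coordinate of $\psi\big((au)^p\big)$ to have, one level further down, a section conjugate to the prescribed $h$. This calls on the fractal/branch structure of $K$ (surjectivity of the first-coordinate projection $\St_K(1)\twoheadrightarrow K$, together with a product subgroup sitting inside $\psi(\St_K(2))$ large enough to contain the relevant conjugates), and making the construction work uniformly for every periodic multi-EGS-group is the delicate point. For a GGS-group one can instead exhibit the needed elements directly; for instance, in the Gupta--Sidki group $ab^{2}b^{a}$ has $ab$ as a level-$2$ section of its cube, and $ab$ has order $p^{2}$ in $K_3$, so $ab^{2}b^{a}$ has order $p^{3}$ in $K_5$.
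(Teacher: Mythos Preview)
Your non-periodic argument is correct and is essentially the paper's Lemma~\ref{P}, carried out directly for a directed generator along an arbitrary spine rather than first conjugating into a GGS-group.

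Your periodic \emph{upper} bound is also correct and is genuinely different from the paper's. The paper shows $\exp(K_2)=p$ by bounding $|K_2|\le p^{p}$ and invoking the theory of regular $p$-groups, then uses the submultiplicativity $\exp(K_{n+k})\le\exp(K_n)\exp(K_k)$. Your homomorphism $\phi$ and the recursion $e_n\le\max(e_{n-1},1+e_{n-2})$ reach the same bound without regular $p$-groups, which is a nice elementarisation.

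The genuine gap is the periodic \emph{lower} bound: you set up the target (produce $t\notin\St_K(1)$ with a level-$2$ section of $t^{p}$ conjugate to a prescribed $h$) but do not construct $t$, and you correctly flag this as the main obstacle. Two remarks on how the paper closes this gap. First, you do not need the construction ``uniformly for every periodic multi-EGS-group'': since any multi-EGS-group contains a conjugate of a GGS-group (the argument you already used implicitly in the non-periodic case generalises, cf.\ the proof of Theorem~\ref{THMmain_nonperiodic}), it suffices to prove $\exp(G_{n+2})\ge p\cdot\exp(G_n)$ for a periodic GGS-group $G=\langle a,c\rangle$. Second, for $G$ the paper uses exactly the two structural inputs you anticipate: fractalness (surjectivity of $\psi\pi_1\colon\St_G(1)\to G$) to choose $h\in\St_G(1)$ with first coordinate a prescribed maximal-order element, and the branch-type inclusions $\gamma_2(G)\times\cdots\times\gamma_2(G)\subseteq\psi(\St_G(1))$ (non-symmetric $\mathbf e$) or $\gamma_3(G)\times\cdots\times\gamma_3(G)\subseteq\psi(\St_G(1))$ (symmetric $\mathbf e$). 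Writing $h=c_{j_1}\cdots c_{j_r}$, one forms $f_1=c_1^{c^{j_1}}\cdots c_1^{c^{j_r}}$, whose first coordinate is $h$ and whose other coordinates are ``almost'' the powers of $a$ one wants for the product in $(af_1)^p$ to collapse; the error terms lie in $\gamma_2(G)$ or $\gamma_3(G)$, and the branch inclusion lets one correct them by multiplying by a suitable $f_2$ (or $[c_0,c_1]^r f_3$ in the symmetric case). This yields $t=af_1f_2$ with a coordinate of $\psi(t^p)$ equal to $h$ (up to conjugacy), completing the step you left open.
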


We observe that, given $K$ any multi-EGS-group, $\exp (K_n)$ can be trivially bounded from above.
Indeed, if we set
\begin{equation*}
    \Gamma = \{f\in \Aut\mathcal{T} : f_{(u)} \text{ is a power of } \sigma =(1 2 \dots p) \text{ for any vertex } u \}
\end{equation*}
we have $K \le \Gamma$ and, since $\Gamma_n$ is a Sylow $p$-subgroup of $\Sym(p^n)$,
\begin{equation}
\label{EQtrivialbound}
    \exp (K_n) \mid  \exp (\Gamma_n) = p^n.
\end{equation}
What is surprising and more delicate to be proved is that, starting from $\exp (K_1)=p$, the exponent of $K_n$ does grow by $p$ whenever $n$ increases by 1 (if $K$ is non-periodic) or whenever $n$ increases by 2 (if $K$ is periodic).
Therefore, not only, as the affirmative answer to the Restricted Burnside Problem guarantees, the increasing sequence $\exp (K_n)$ cannot converge to a finite number, but its growth to infinity is exponential in $n$ (which is, according to (\ref{EQtrivialbound}), the quickest growth we could expect).

\vspace{10 pt}

\noindent \textit{Notation.}
We will use the convention that $x^y=y^{-1}x y$ and $[x,y]=x^{-1}y^{-1}xy$.
Moreover, we will write $fg$ for the composition of two maps $f$ and $g$, where we apply first $f$ and then $g$ (i.e. $fg(u)=g(f(u))$).

\vspace{10 pt}

\noindent \textbf{Acknowledgements.}
I am deeply grateful to G.\ A.\ Fern\'{a}ndez-Alcober for the valuable suggestions and the continued support.
I thank the Basque Center for Applied Mathematics (BCAM) and the SEND Mobility Consortium for financial contributions to the carrying out of this work.

\section{Non-periodic multi-EGS-groups}

In this section we prove Theorem \ref{mainthm} for non-periodic multi-EGS-groups.
By (\ref{EQtrivialbound}), it is enough to show that $\exp (K_n) \ge p^n$ for any non-periodic multi-EGS-group $K$.

It can be easily checked that multi-EGS-groups are self-similar and that will be an important ingredient for the purpose of proving Theorem \ref{mainthm}.
We highlight some properties of self-similar subgroups of $\Aut \mathcal{T}$ which will be useful in this and in the following section.
\begin{lemma}
\label{L21}
    Let $S$ be a self-similar subgroup of $\Aut\mathcal{T}$. Then, for any natural numbers $k,n$ such that $ k\leq n$, the map
    \begin{align*}
    \hspace{0,5 cm} \St_{S_{n}}(k)\hspace{0,3 cm} &\longrightarrow S_{n-k}\times \overset{p^k}{\cdots} \times S_{n-k}
    \\
    \hspace{0,4 cm}g \St_{S}(n) \hspace{0,2 cm} &\longmapsto \hspace{0 cm} \big( g_u \St_{S}(n-k) \big)_{u \in X^{k}}
\end{align*}
is an injective group homomorphism.
\end{lemma}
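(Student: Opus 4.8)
The plan is to obtain the displayed map as a quotient of the natural ``level-$k$ section'' homomorphism on $\St(k)$ and then to verify well-definedness and injectivity by a short analysis of sections. First I would record the level-$k$ analogue of $\psi$: iterating $\psi$ exactly $k$ times produces a group isomorphism
\[
\psi_k\colon\St(k)\;\longrightarrow\;\Aut\mathcal{T}\times\overset{p^k}{\cdots}\times\Aut\mathcal{T},\qquad \psi_k(g)=\bigl(g_u\bigr)_{u\in X^k},
\]
and, since $S$ is self-similar, $\psi_k$ maps $\St_S(k)$ into $S\times\overset{p^k}{\cdots}\times S$. I would then isolate the two elementary facts about sections that drive the argument.

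The first is that $\psi_k$ restricts to a group homomorphism on $\St_S(k)$: if $g,h\in\St(k)$ then both fix every vertex of $X^k$, so from $g(uv)=g(u)\,g_u(v)$ and $g(u)=u$ one gets $(gh)(uv)=h\bigl(u\,g_u(v)\bigr)=u\,(g_u h_u)(v)$ for $u\in X^k$, i.e. $(gh)_u=g_u h_u$, and likewise $(g^{-1})_u=(g_u)^{-1}$. The second is that, for $g\in\St_S(k)$, one has $g\in\St_S(n)$ if and only if $g_u\in\St_S(n-k)$ for every $u\in X^k$. Both implications come from factoring a vertex $w\in X^n$ as $w=uv$ with $u\in X^k$, $v\in X^{n-k}$, and using $g(uv)=u\,g_u(v)$: if $g$ fixes $X^n$ then each $g_u$ fixes $X^{n-k}$, and $g_u\in S$ by self-similarity, so $g_u\in\St_S(n-k)$; conversely, if every $g_u\in\St_S(n-k)$ then $g$ fixes every $w\in X^n$.

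With these facts the conclusion is immediate. By the first fact $\psi_k$ restricts to a homomorphism $\St_S(k)\to S\times\overset{p^k}{\cdots}\times S$, and by the ``only if'' direction of the second fact it carries $\St_S(n)$ into $\St_S(n-k)\times\overset{p^k}{\cdots}\times\St_S(n-k)$; composing with the coordinatewise quotient $S\to S/\St_S(n-k)=S_{n-k}$ therefore yields a homomorphism that factors through $\St_S(k)/\St_S(n)=\St_{S_n}(k)$, and the induced map $\St_{S_n}(k)\to S_{n-k}\times\overset{p^k}{\cdots}\times S_{n-k}$ is exactly $g\St_S(n)\mapsto(g_u\St_S(n-k))_{u\in X^k}$. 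Its kernel consists of the cosets $g\St_S(n)$ with $g\in\St_S(k)$ and $g_u\in\St_S(n-k)$ for all $u\in X^k$; by the ``if'' direction of the second fact any such $g$ lies in $\St_S(n)$, so the map is injective. I do not expect a serious obstacle here: the only point needing care is the bookkeeping identifying the subtree hanging from a vertex $u\in X^k$ with $\mathcal{T}$ (so that level $n$ of $\mathcal{T}$ corresponds to level $n-k$ of that subtree) and the cocycle-type identity for sections of a product. A fully equivalent alternative would be induction on $k$, with the base case $k=1$ supplied directly by $\psi$ and the inductive step peeling off one level at a time, but the argument through $\psi_k$ seems cleanest.
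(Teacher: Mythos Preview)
Your proof is correct. The paper states this lemma without proof, treating it as a standard fact about self-similar groups that follows from iterating the isomorphism $\psi$; your argument spells out precisely these routine details.
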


We say that an element $g\in S$ has order $t$ in $S_n$ if $g \St_S(n)$ has order $t$ in $S_n$.
Then, provided that $S$ is self-similar, we have the following consequence of Lemma \ref{L21}:
\begin{equation}
\label{EQ1}
    \text{if a component of } \psi(g) \text{ has order } t \text{ in } S_n \implies g \text{ has order } \ge t \text{ in } S_{n+1}.
\end{equation}
Another consequence of Lemma \ref{L21} is stated in the following result.
\begin{lemma}
\label{L4}
    Let $S\leq \Aut\mathcal{T}$ be a self-similar group. Then
    \begin{equation*}
        \exp(S_{n+k}) \le \exp(S_n) \cdot \exp(S_k)
    \end{equation*}
    for every $n,k \geq 0$.
\end{lemma}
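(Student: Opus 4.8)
The plan is to fix an arbitrary element $g\in S$ and prove the stronger statement that the order of $g$ in $S_{n+k}$ divides $\exp(S_n)\cdot\exp(S_k)$; since the exponent of $S_{n+k}$ is the least common multiple of the orders of its elements, this gives the claim. One may assume $n,k\ge 1$, because if $n=0$ or $k=0$ the group $S_0$ is trivial and the inequality is immediate.

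First I would set $m=\exp(S_n)$, so that $g^m\in\St_S(n)$ and hence $g^m\St_S(n+k)\in\St_{S_{n+k}}(n)$. Then I would apply Lemma \ref{L21} with $n+k$ in place of $n$ and $n$ in place of $k$ (this is allowed since $n\le n+k$), obtaining an injective group homomorphism
\[
\St_{S_{n+k}}(n)\longrightarrow S_{k}\times\overset{p^{n}}{\cdots}\times S_{k},\qquad h\St_S(n+k)\longmapsto\bigl(h_u\St_S(k)\bigr)_{u\in X^{n}},
\]
where self-similarity of $S$ is exactly what guarantees that each section $h_u$ of an element $h\in\St_S(n)$ again lies in $S$. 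Since this map is an injective homomorphism into a direct product, the order of $g^m\St_S(n+k)$ in $S_{n+k}$ equals the least common multiple, over $u\in X^{n}$, of the orders of the sections $(g^m)_u\St_S(k)$ in $S_k$. Each of these divides $\exp(S_k)$, hence the order of $g^m$ in $S_{n+k}$ divides $\exp(S_k)$, i.e.\ $g^{m\exp(S_k)}\in\St_S(n+k)$; this says precisely that the order of $g$ in $S_{n+k}$ divides $m\cdot\exp(S_k)=\exp(S_n)\cdot\exp(S_k)$.

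I do not expect a genuine obstacle: the lemma is essentially a repackaging of Lemma \ref{L21}. The points that need care are the bookkeeping of indices in the application of Lemma \ref{L21}, the use of self-similarity to keep the relevant sections inside $S$, and the elementary fact that in a direct product the order of a tuple is the least common multiple of the orders of its coordinates --- which is what lets the injective homomorphism transport the order computation from $g^m$ down to its sections.
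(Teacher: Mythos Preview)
Your argument is correct and follows essentially the same route as the paper: both use Lemma~\ref{L21} to embed $\St_{S_{n+k}}(n)$ into a direct product of $p^n$ copies of $S_k$, which forces the exponent of this stabilizer quotient to divide $\exp(S_k)$. The only cosmetic difference is that you track an individual element $g$ while the paper phrases everything in terms of the quotient $S_{n+k}/\St_{S_{n+k}}(n)\simeq S_n$ directly.
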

\begin{proof}
Let us observe that for any $n,k \geq 0$
\begin{equation*}
    S_{n} = \frac{S}{\St_{S}(n)} \simeq \frac{\frac{S}{\St_{S}(n+k)}}{\frac{\St_{S}(n)}{\St_{S}(n+k)}} = \frac{S_{n+k}}{\St_{S_{n+k}}(n)}
\end{equation*}
which yields that $\exp(S_{n+k}) $ divides $\exp(S_n) \cdot \exp(\St_{S_{n+k}}(n))$. Then it suffices to show that $\exp(\St_{S_{n+k}}(n))$ divides $\exp(S_k)$.
This follows from the fact that $\St_{S_{n+k}}(n)$ can be embedded in $S_k\times \overset{p^n}{\cdots} \times S_k$
by Lemma \ref{L21}. 
\end{proof}

We first prove Theorem \ref{mainthm} for non-periodic GGS-groups.
From now until the end of the paper, we will denote with $G$ the GGS-group with defining vector $\mathbf{e}=(e_1,\dots,e_{p-1})$ and we will indicate with $c$ the directed automorphism along $P_p$ defined by $\mathbf{e}$.
From what we observed in (\ref{EQgenerateK}), $G=\langle a,c \rangle $.
If we set $c_i:=c^{(a^i)}$ for any integer $i$, we have $c_i = c_j$ whenever $i\equiv j(\text{mod }p)$ and we get, by (\ref{EQconjugatebya}), that
\begin{align}
\label{Z1}
\begin{split}
    \psi(c_0) &= (a^{e_1},a^{e_2},\dots,a^{e_{p-1}},c)
    \\
    \psi(c_1)&=(c,a^{e_1},\dots,a^{e_{p-2}},a^{e_{p-1}})
    \\
    &\vdots
    \\
    \psi(c_{p-1}) &= (a^{e_2},a^{e_3},\dots,c,a^{e_1}).
    \end{split}
\end{align}
We will denote, in addition, by $\pi _i$ the projection $\pi_i : G \times \overset{p}{\dots} \times G \longrightarrow G$ on the $i$th component.

\begin{lemma}
\label{P}
     Assume $G$ to be a non-periodic GGS-group and let $x$ be the only integer in $\{0,\dots,p-1\}$ whose class in $\mathbb{F}_p$ is equal to $e_1+\dots+e_{p-1}$.
     Then, for every $n\geq 1$, any element of form $a^x c_j$ with $j\in \{0,\dots,p-1\}$ has order $\ge p^n$ in $G_n$.
\end{lemma}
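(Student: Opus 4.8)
The plan is to prove the statement by induction on $n$, using the self-similarity consequence \eqref{EQ1}: if some component of $\psi$ of a suitable element has order $\ge p^{n-1}$ in $G_{n-1}$, then the element itself has order $\ge p^n$ in $G_n$. So the core of the argument is to pass from an element $g = a^x c_j$ to the $p$-th power $g^p \in \St(1)$, compute $\psi(g^p)$ via formula \eqref{EQpth_powers}, and recognize (after conjugating by a power of $a$, which does not change orders in $G_n$ by \eqref{EQconjugatebya}) one of its components as again an element of the form $a^x c_{j'}$ — thereby feeding the induction. The base case $n=1$ is immediate since $a^x c_j \notin \St(1)$ (its root label is $\sigma^x \cdot \mathrm{id}$, and $x \ne 0$ exactly because $G$ is non-periodic, i.e.\ $e_1 + \dots + e_{p-1} \ne 0$ in $\mathbb{F}_p$), so it has order $p$ in $G_1$.

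The key computation is the inductive step. Fix $j$, write $g = a^x c_j$, and use \eqref{EQpth_powers} with $(g_1, \dots, g_p) = \psi(c_j)$ read off from \eqref{Z1}: each component of $\psi(c_j)$ is either $c$ (in exactly one slot, say slot $s$) or a power $a^{e_\bullet}$. By \eqref{EQpth_powers}, the $m$-th component of $\psi(g^p)$ is the cyclically-shifted product $g_{m+1} g_{m+2} \cdots g_p g_1 \cdots g_m$. Exactly one such cyclic product has $c$ sitting in a chosen position; more importantly, each component is $c$ (up to the abelian $\langle a \rangle$-part) times a power of $a$ whose exponent is a sum of a consecutive block of the $e_i$'s together with the accumulated $a$-part coming from moving $c$ past them. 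I would pick the component — say the one indexed so that $c$ is the leading factor — which, up to conjugation by a power of $a$ (allowed by \eqref{EQconjugatebya} since conjugation is an automorphism of $G$ preserving $\St_G(n)$), equals $a^{x'} c_{j'}$ for some $j'$; the point is that the total $a$-exponent accumulated around a \emph{full} cycle is $e_1 + \dots + e_{p-1} = x$, so after the cyclic rearrangement one lands on an element whose $a$-part is again exactly $a^x$. Then by induction $a^{x'}c_{j'}$ has order $\ge p^{n-1}$ in $G_{n-1}$, so by \eqref{EQ1} the element $g = a^x c_j$ has order $\ge p^n$ in $G_n$.

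The main obstacle I anticipate is the bookkeeping in the previous paragraph: verifying carefully that, among the $p$ components of $\psi(g^p)$ given by \eqref{EQpth_powers}, there is genuinely one that is $\langle a\rangle$-conjugate to an element of the \emph{same} shape $a^x c_{j'}$ — i.e.\ that the $a$-exponent really is $x$ again and not some other value — and handling the two regimes $j = p$ versus $j \ne p$ (equivalently, whether $c$ occupies the last slot of $\psi(c_j)$ or an interior slot) in the cyclic products. One has to be slightly careful that when $c$ is not in the last position, the cyclic product wrapping around contributes $e$-exponents from \emph{both} ends of the defining vector, but their total over a full cycle is still $e_1 + \dots + e_{p-1} = x$, and the residual discrepancy is absorbed into the $\langle a\rangle$-conjugation. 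A secondary point is that conjugating $a^{x'}c_{j'}$ by powers of $a$ cycles $j'$ through all residues, so the inductive hypothesis — which is stated uniformly over all $j \in \{0,\dots,p-1\}$ — applies to whichever $j'$ shows up; this is exactly why the lemma is phrased for every $j$ simultaneously rather than a single one.
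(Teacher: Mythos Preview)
Your plan is correct and matches the paper's argument: induction on $n$, with the inductive step obtained by exhibiting a component of $\psi((a^x c_j)^p)$ that again has the form $a^x c_{j'}$.

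One wrinkle worth noting: formula \eqref{EQpth_powers} is stated only for $(ag)^p$, not $(a^x g)^p$, so for $x\neq 1$ the components of $\psi((a^x c_j)^p)$ are not the consecutive cyclic products $g_{m+1}\cdots g_m$ you describe, but products stepping by $x$ through the indices. The paper sidesteps this --- and the case-split on $j$ you anticipate --- by computing directly
\[
(a^x c_j)^p \;=\; c_{j+x(p-1)}\,c_{j+x(p-2)}\cdots c_{j+x}\,c_j,
\]
and observing that, since $x\not\equiv 0\pmod p$, this is simply a product of $c_0,\dots,c_{p-1}$ in \emph{some} order. Then by \eqref{Z1} the \emph{first} component under $\psi$ is a product of $a^{e_1},\dots,a^{e_{p-1}},c$ in some order, which collapses to $a^{e_1+\cdots+e_{p-1}}c_s = a^x c_s$ for some $s$. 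No case distinction on $j$ and no delicate bookkeeping are required.
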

\noindent \textit{Proof.}
We argue by induction on $n$.
For $n=1$, $G_1 \simeq \langle a \rangle 
$ is the cyclic group of order $p$. Since $G$ is not periodic, $x \not \equiv 0 (\text{mod }p) $ and then, for any $j\in \{0,\dots,p-1\}$, $a^x c_j \not \in \St_{G}(1)$, which yields that $a^x c_j$ has order $p$ in $G_1$.

Assume now $n\geq 2$ and let $j\in \{0,\dots,p-1\}$.
We have
\begin{align*}
    (a^x c_j)^p &= a^{xp} {c_j}^{(a^{x(p-1)})} {c_j}^{(a^{x(p-2)})} \dots {c_j}^{(a^{x})} c_j
    \\
    &=c_{j+x(p-1)} c_{j+x(p-2)} \dots c_{j+x} c_j
\end{align*}
and, since $x \not \equiv 0 (\text{mod }p)$, $\{j+x(p-1),j+x(p-2),\dots,j+x,j\}$ is a set of representatives for the congruence classes modulo $p$. In other words, $(a^x c_j)^p $ is the product, in some order, of the 
$p$ factors $c_0, \dots, c_{p-1}$. Then,
by (\ref{Z1}), the first component of $\psi((a^x c_j)^p) $ is the product, in some order, of the $p$ factors $a^{e_1}, \dots, a^{e_{p-1}},c$. This means that 
\begin{align*}
     \psi\pi_1((a^x c_j)^p) &= a^{f_1}\dots a^{f_k} c a^{f_{k+1}} \dots a^{f_{p-1}}
    \\
    &= a^{f_1 + \dots + f_{p-1}} c^{(a^{f_{k+1}+\dots +f_{p-1}})}
\end{align*}
for some $0\le k\le p-1$ and $f_1+\dots+ f_{p-1}=e_1+ \dots + e_{p-1}$.
Then $ \psi\pi_1((a^x c_j)^p)$ is of the form $a^x c_s$ for some $s\in \{0,\dots,p-1\}$ and by induction hypothesis it has order $\ge p^{n-1}$ in $G_{n-1}$. This yields, by (\ref{EQ1}), that $a^x c_j$ has order $\geq p^n$ in $G_n$. \qed

\vspace{10 pt}

Now the following result is straightforward.
\begin{prop}
\label{CC}
    Let $G$ be a non-periodic GGS-group. Then
    \begin{equation*}
        \exp(G_n) = p^n.
    \end{equation*}
    for every $n\ge 1$.
\end{prop}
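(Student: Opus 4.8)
The plan is to combine the trivial upper bound with the lower bound just proven. By~(\ref{EQtrivialbound}) we already know that $\exp(G_n)\mid p^n$, so in particular $\exp(G_n)\le p^n$ for every $n\ge 1$. For the reverse inequality it suffices to exhibit a single element of $G$ whose order in $G_n$ is at least $p^n$, and Lemma~\ref{P} does exactly this: taking $j=0$ (say), the element $a^x c$ has order $\ge p^n$ in $G_n$, where $x\in\{0,\dots,p-1\}$ reduces to $e_1+\dots+e_{p-1}$ modulo $p$. Hence $p^n\le \exp(G_n)\le p^n$, giving $\exp(G_n)=p^n$.

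Concretely, I would write: since $G$ is not periodic, by the periodicity criterion recalled in the introduction (Lemma~3.13 of~\cite{EGS}) the defining space $E^{(p)}$ of $G$ is not contained in the hyperplane $V$, which means $e_1+\dots+e_{p-1}\neq 0$ in $\mathbb{F}_p$; thus the integer $x$ of Lemma~\ref{P} is nonzero and the hypotheses of that lemma are met. Then $\exp(G_n)\ge \operatorname{ord}_{G_n}(a^x c)\ge p^n$ by Lemma~\ref{P}, while $\exp(G_n)\mid p^n$ by~(\ref{EQtrivialbound}); combining the two yields the claim.

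There is essentially no obstacle here — this proposition is the immediate corollary obtained by sandwiching $\exp(G_n)$ between the two bounds established beforehand, which is why the paper calls it ``straightforward.'' The only point requiring a word of care is making explicit that a non-periodic GGS-group has $e_1+\dots+e_{p-1}\neq 0$, so that Lemma~\ref{P} genuinely applies; once that is noted, the two-line argument above completes the proof.
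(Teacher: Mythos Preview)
Your proposal is correct and matches the paper's intended argument: the proposition is stated as ``straightforward'' precisely because it is the sandwich of the upper bound~(\ref{EQtrivialbound}) with the lower bound from Lemma~\ref{P}. Note that Lemma~\ref{P} already assumes $G$ non-periodic, so your extra justification that $e_1+\dots+e_{p-1}\neq 0$ is not needed to invoke it (that fact is used \emph{inside} the proof of Lemma~\ref{P}, not as a hypothesis you must verify).
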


To extend Proposition \ref{CC} to any non-periodic multi-EGS-group, it will suffice to realise that a non-periodic multi-EGS-group always contains a conjugate of a non-periodic GGS-group.

\begin{theorem}
\label{THMmain_nonperiodic}
    Let $K$ be a non-periodic multi-EGS-group. Then
    \begin{equation*}
        \exp (K_n) = p^n
    \end{equation*}
    for every $n\ge 1$.
\end{theorem}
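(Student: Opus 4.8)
The plan is to reduce the statement about arbitrary non-periodic multi-EGS-groups to the already-proven case of non-periodic GGS-groups (Proposition \ref{CC}), by exhibiting inside $K$ a conjugate of some non-periodic GGS-group. First I would note that, since $K = \langle a, B^{(1)}, \dots, B^{(p)}\rangle$ is non-periodic, by the periodicity criterion recalled in the introduction (Lemma 3.13 of \cite{EGS}) at least one of the defining spaces, say $E^{(j)}$, is \emph{not} contained in the hyperplane $V = \{(f_1,\dots,f_{p-1}) : f_1 + \dots + f_{p-1} = 0\}$. Pick a vector $\mathbf{e} \in E^{(j)}$ with $e_1 + \dots + e_{p-1} \neq 0$ in $\mathbb{F}_p$, and let $b \in B^{(j)} \le K$ be the directed automorphism along $P_j$ defined by $\mathbf{e}$.

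Next I would compare the two-generator subgroup $\langle a, b\rangle \le K$ with the GGS-group $G = \langle a, c\rangle$ with defining vector $\mathbf{e}$, where $c$ is directed along $P_p$. The point is that $b$ and $c$ have the same defining vector but are directed along different paths; concretely, $\psi(b)$ and $\psi(c)$ are obtained from one another by a cyclic shift of the $p$ coordinates. This cyclic shift of coordinates is exactly the effect of conjugation by a suitable power of $a$ (compare (\ref{EQconjugatebya})): choosing $t$ so that conjugation by $a^t$ sends $P_j$ to $P_p$, one checks directly from the portraits that $b^{a^t} = c$, and hence $\langle a, b\rangle^{a^t} = \langle a^{a^t}, b^{a^t}\rangle = \langle a, c\rangle = G$. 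Thus $G$ itself — not merely an abstract copy of it — sits inside $K^{a^t}$, equivalently $G^{a^{-t}} \le K$.

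Since conjugation in $\Aut\mathcal{T}$ by the element $a$ (which normalizes $\St(n)$, as $a \in \St(1) \cdot \langle a\rangle$ and $\St(n)$ is normal in $\Aut\mathcal{T}$ for all $n$... more simply, $\St(n) \trianglelefteq \Aut\mathcal{T}$) induces an isomorphism $\Aut\mathcal{T}_n \to \Aut\mathcal{T}_n$, we get $(G^{a^{-t}})_n \simeq G_n$, so $G^{a^{-t}}$ has the same sequence of congruence quotient exponents as $G$. In particular, since $\mathbf{e}$ has nonzero coordinate sum, $G$ is a non-periodic GGS-group, so by Proposition \ref{CC} we have $\exp(G_n) = p^n$, whence $\exp\big((G^{a^{-t}})_n\big) = p^n$. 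Because $H \le G' \le \Aut\mathcal{T}$ implies $H_n$ embeds in $G'_n$, and $G^{a^{-t}} \le K$, we conclude $p^n = \exp\big((G^{a^{-t}})_n\big) \mid \exp(K_n)$. Combined with the trivial upper bound $\exp(K_n) \mid p^n$ from (\ref{EQtrivialbound}), this gives $\exp(K_n) = p^n$ for every $n \ge 1$, as desired.

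The only genuinely delicate point is verifying the conjugacy relation $b^{a^t} = c$ carefully: one must check not just that the portraits of $b$ and $c$ agree up to the appropriate cyclic relabeling of $X = \{1,\dots,p\}$, but that the correct power $t$ of $a$ realizes precisely this relabeling, recursively at every level along the respective paths — so that the recursive definition $\psi(b) = (a^{e_{p-j+1}},\dots,a^{e_{p-1}}, b, a^{e_1},\dots,a^{e_{p-j}})$ is transported exactly onto $\psi(c) = (a^{e_1},\dots,a^{e_{p-1}}, c)$. This is a routine but slightly fiddly computation with the isomorphism $\psi$ and formula (\ref{EQconjugatebya}); everything else is immediate from the results already established.
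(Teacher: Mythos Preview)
Your overall strategy matches the paper's, but there is a genuine error in the conjugation step. The claim $b^{a^t} = c$ is false. The element $a$ is rooted: its only nontrivial label sits at $\emptyset$, so conjugation by any power $a^t$ merely cyclically permutes the $p$ first-level sections, leaving each section itself unchanged. Taking $t = p-j$ one gets, by~(\ref{EQconjugatebya}),
\[
\psi\big(b^{a^{p-j}}\big) = (a^{e_1}, \dots, a^{e_{p-1}}, b),
\]
whose last component is still $b$, not $c$; since $b$ is directed along $P_j$ while $c$ is directed along $P_p$, these elements differ whenever $j \neq p$. Equivalently, $a^{p-j}$ does \emph{not} send the infinite path $P_j = (\emptyset, j, jj, \dots)$ to $P_p$: for instance $a^{p-j}(jj)$ is the word $p\,j$, not $pp$.

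The paper repairs this precisely at the point you flagged as delicate. One needs an automorphism that performs the cyclic relabeling at \emph{every} level, namely the element $f \in \Aut\mathcal{T}$ whose label at every vertex equals $\sigma^{p-j}$. Writing $f = x\,a^{p-j}$ with $x \in \St(1)$ and $\psi(x) = (f,\dots,f)$, one checks that $f$ commutes with $a$ and that $\psi(b^f) = (a^{e_1}, \dots, a^{e_{p-1}}, b^f)$; this is exactly the defining recursion for $c$, so $b^f = c$ and $\langle a, b\rangle^f = \langle a, c\rangle = G$. Note that $f$ is not a power of $a$ and need not lie in $K$, but that is harmless: conjugation in $\Aut\mathcal{T}$ still induces an isomorphism of congruence quotients because $\St(n) \trianglelefteq \Aut\mathcal{T}$. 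With this correction, the remainder of your argument --- the embedding $(\langle a, b\rangle)_n \hookrightarrow K_n$, the appeal to Proposition~\ref{CC}, and the upper bound~(\ref{EQtrivialbound}) --- goes through verbatim.
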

\noindent \textit{Proof.}
Take $K$ a multi-EGS-group as in Definition \ref{D1}. For every $j\in \{1,\dots,p\}$ and every $\mathbf{e}\in E^{(j)}$, we set $H_{\mathbf{e}}^{(j)}=\langle a,b \rangle$, where $b$ is the directed automorphism along $P_j$ with defining vector $\mathbf{e}$.

We first show that, fixed $j\in \{1,\dots,p\}$ and $\mathbf{e}=(e_1, \dots, e_{p-1})\in E^{(j)}\setminus \{\mathbf{0}\}$, $H_{\mathbf{e}}^{(j)}=\langle a,b \rangle$ is conjugate to the GGS-group $L=\langle a,d \rangle $, where $d$ is the directed automorphism along $P_p$ with defining vector $\mathbf{e}$.
We denote with $\sigma$ the $p$-cycle $(1 2 \dots p) \in \mathrm{Sym}(p)$ and
we call $f$ the automorphism of $\mathcal{T}$ that has all the labels equal to $\sigma ^{p-j}$. In other words, $f=x a^{p-j}$, with $x\in \St(1)$ defined by $\psi(x)=(f,\dots , f)$.
By (\ref{EQconjugatebya}) we have $\psi(x^a)=(f,\dots,f)=\psi(x)$, then $x$ commutes with $a$ and $f$ commutes with $a$.
It follows from this and again from (\ref{EQconjugatebya}) that
\begin{equation*}
    \psi(b^x)= (a^{e_{p-j+1}},\dots,a^{e_{p-1}}, b^f, a^{e_1},\dots,a^{e_{p-j}}) 
\end{equation*}
and 
\begin{equation*}
    \psi(b^f)=(a^{e_1}, \dots, a^{e_{p-1}}, b^f),
\end{equation*}
which yields that $b^f=d$. All in all, we have $(H_{\mathbf{e}}^{(j)})^f=L$, as desired.

Now, $K$ is not periodic and then there is $\mathbf{e}$ which lies in some ${E^{(j)}} $ but not in $V$. For what we just proved, $H_{\mathbf{e}}^{(j)}\le K$ is conjugate to a non-periodic GGS-group. Thus the $n$th congruence quotient of $H_{\mathbf{e}}^{(j)}$ is isomorphic to the $n$th congruence quotient of such GGS-group, and it can be embedded in $K_n$. It follows that $\exp (K_n)\ge p^n$ by Proposition \ref{CC}, and we have the claim thanks to (\ref{EQtrivialbound}). \qed


\section{Periodic multi-EGS-groups}
In this section we prove Theorem \ref{mainthm} for a periodic multi-EGS-group $K$.
As for the non-periodic case, it will be easy to show that the expression provided by Theorem \ref{mainthm} is an upper bound for $\exp (K_n)$. More efforts will be needed to prove that such expression is a lower bound too.

We first compute the exponent of $K_2$.
For such purpose, we need to fix the following notation: for every $W\subseteq (\mathbb{F}_{p})^{p-1}$, we indicate with $\overline{W}$ the subset of $(\mathbb{F}_{p})^{p}$ given by
\begin{equation*}
    \overline{W}=\{(e_{p-j+1},\dots,e_{p-1},0,e_1,\dots,e_{p-j}): 1\le j\le p \text{ and } (e_1,\dots,e_{p-1})\in W \}.
\end{equation*}
\begin{lemma}
\label{L11}
   Let $K$ be a multi-EGS-group as in Definition \ref{D1}. Then
   \begin{equation*}
       |K_2|= p^{t+1},
   \end{equation*}
   where $t$ is the dimension of the $\mathbb{F}_p$-vector space $\langle 
   \overline{E^{(j)}} : j=1,\dots,p \rangle$.
\end{lemma}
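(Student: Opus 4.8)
The plan is to peel $K_2$ into its two natural layers. Since $\St_K(1)\trianglelefteq K$ and $\St_{K_2}(1)=\St_K(1)/\St_K(2)$, the third isomorphism theorem gives $K_2/\St_{K_2}(1)\cong K/\St_K(1)=K_1$, so $|K_2|=|K_1|\cdot|\St_{K_2}(1)|$. All the directed automorphisms generating $K$ lie in $\St(1)$ while $a$ acts on $X$ as a $p$-cycle, hence $K_1=\langle a\St_K(1)\rangle$ has order $p$. It therefore remains to prove $|\St_{K_2}(1)|=p^{t}$.

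For the bottom layer I would apply Lemma \ref{L21} with $S=K$, $k=1$, $n=2$: it provides an injective homomorphism $\St_{K_2}(1)\hookrightarrow K_1\times\overset{p}{\cdots}\times K_1$, $g\St_K(2)\mapsto(g_i\St_K(1))_{i\in X}$. Identifying $K_1$ with $\mathbb{F}_p$ via $a^{m}\St_K(1)\leftrightarrow m$ converts this into an injective homomorphism $\phi\colon\St_{K_2}(1)\to\mathbb{F}_p^{p}$ recording the exponents of $a$ read off the first-level sections of $g$ modulo $\St_K(1)$. Since a subgroup of $\mathbb{F}_p^{p}$ is an $\mathbb{F}_p$-subspace, $|\St_{K_2}(1)|=|\operatorname{Im}\phi|=p^{\dim_{\mathbb{F}_p}\operatorname{Im}\phi}$, so the whole task reduces to showing $\operatorname{Im}\phi=\langle\overline{E^{(j)}}:j=1,\dots,p\rangle$.

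To compute the image, I would first note that $\St_K(1)=\langle b^{a^i}:b\in\bigcup_{j}B^{(j)},\ 0\le i\le p-1\rangle$: because $a^{p}=1$, conjugation by $a$ permutes these elements among themselves, so they generate a subgroup $H$ normalised by $a$ (and by each $b$, since $b\in H$), hence $H\trianglelefteq K$ with $K/H$ cyclic of order dividing $p$; as $H\le\St_K(1)$ and $[K:\St_K(1)]=p$ we must have $H=\St_K(1)$. Consequently $\operatorname{Im}\phi$ is the $\mathbb{F}_p$-span of the vectors $\phi\big(b^{a^i}\St_K(2)\big)$. For $b\in B^{(j)}$ with defining vector $\mathbf{e}=(e_1,\dots,e_{p-1})$, the relation $\psi(b)=(a^{e_{p-j+1}},\dots,a^{e_{p-1}},b,a^{e_1},\dots,a^{e_{p-j}})$ together with $b\in\St_K(1)$ gives $\phi\big(b\St_K(2)\big)=(e_{p-j+1},\dots,e_{p-1},0,e_1,\dots,e_{p-j})$, and by (\ref{EQconjugatebya}) conjugating by $a$ rotates this tuple cyclically. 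Letting $i$ run over $0,\dots,p-1$ and $\mathbf{e}$ over $E^{(j)}$ produces exactly the set $\overline{E^{(j)}}$ of the statement, and taking the union over $j$ yields $\operatorname{Im}\phi=\langle\overline{E^{(j)}}:j=1,\dots,p\rangle$, of dimension $t$. Hence $|\St_{K_2}(1)|=p^{t}$ and $|K_2|=p^{t+1}$. The main obstacle is purely bookkeeping: one must check carefully that the index shifts in the definition of $\overline{W}$ coincide with the cyclic rotations coming from (\ref{EQconjugatebya}) — an easy place to make an off-by-one error — and that the normal-closure argument for the generating set of $\St_K(1)$ really uses $a^{p}=1$.
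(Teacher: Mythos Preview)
Your proof is correct and follows essentially the same route as the paper: factor $|K_2|=|K_1|\cdot|\St_{K_2}(1)|$, embed $\St_{K_2}(1)$ into $(\mathbb{F}_p)^{p}$ via Lemma~\ref{L21}, identify the generators of $\St_K(1)$ as the conjugates $b^{a^{i}}$ by a normal-closure argument, and check that their images span exactly $\langle\overline{E^{(j)}}:j=1,\dots,p\rangle$. The only cosmetic difference is that the paper phrases the first step as $K=\langle a\rangle\ltimes\St_K(1)$ rather than invoking the third isomorphism theorem.
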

\noindent \textit{Proof.}
Observe that $K= \langle a \rangle \ltimes \St_K(1)$ and $K_2
$ has order equal to $|\langle a \rangle| \cdot  |\frac{\St_K(1)}{\St_K(2)}| = p \cdot  |\frac{\St_K(1)}{\St_K(2)}|$. Then it suffices to show that $|\St_{K_{2}}(1)| = p^t$, with $t$ as in the statement. Now, $K_1
\simeq \langle a \rangle \simeq  \mathbb{F}_p$ and the composition $\varphi$ between the map
\begin{align*}
   \hspace{0,5 cm}\hspace{0,2 cm} \St_{K_{2}}(1)\hspace{0,3 cm} &\longrightarrow K_1\times \overset{p}{\cdots} \times K_1
   \\
   \hspace{0,4 cm}g \St_{K}(2) \hspace{0,2 cm} &\longmapsto (g_1 \St_K(1),\dots,g_p \St_K(1))
\end{align*}
and the map
\begin{align*}
     \hspace{1,7 cm} K_1\times \overset{p}{\cdots} \times K_1 \hspace{1,5 cm} &\longrightarrow (\mathbb{F}_p)^p
    \\
    (a^{f_1}\St_K(1),\dots , a^{f_p}\St_K(1)) \hspace{0,2 cm}&\longmapsto (f_1,\dots,f_p),
\end{align*}
is, by Lemma \ref{L21}, an injective homomorphism.
It can be easily proved, by using (\ref{EQconjugatebya}), that $\bigcup_{j=1}^{p} \overline{E^{(j)}} = \varphi(\Delta)$ where $\Delta= \{{b}^{(a^l)}: b\in \bigcup_{j=1}^{p} B^{(j)} \text{ and } 0\le l\le p-1 \}$.


If we prove that $\St_K(1)$ is generated by $\Delta$, we will get that $\varphi (\St_{K_{2}}(1))$ is the vector subspace of $(\mathbb{F}_p)^p$ generated by $\bigcup_{j=1}^{p} \overline{E^{(j)}}$, and this will prove the claim since $|\varphi (\St_{K_{2}}(1))|= |\St_{K_{2}}(1)|$.

Then we are only left to show that $\St_K(1)$ is generated by $\Delta$. This follows from the fact that $\langle \Delta \rangle \leq \St_K(1)$, $\langle \Delta \rangle $ is normal in $K$ and the quotients $\frac{K}{\langle \Delta \rangle }$, $\frac{K}{\St_K(1)}$ have the same order (equal to $p$) since they are both generated by the class of $a$. \qed
\vspace{5 pt}
\begin{lemma}
\label{L22}
    Let $K$ be a periodic multi-EGS-group. Then the exponent of $K_2$ is $p$.
\end{lemma}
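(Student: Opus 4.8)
The plan is to show that every generator of $K_2$ has order dividing $p$, so that $\exp(K_2) \mid p$; combined with $\exp(K_1) = p$ and $K_1$ being a quotient of $K_2$, this forces $\exp(K_2) = p$. By (\ref{EQgenerateK}), $K$ is generated by $a$ together with a finite set $\mathcal{B} = \bigcup_j \mathcal{B}^{(j)}$ of directed automorphisms, so $K_2$ is generated by the images of $a$ and of the elements $b \in \mathcal{B}$, together with their products. Since $a$ has order $p$, and each $b \in \St(1)$ has the property that its sections at the first level are of the form $a^{e_i}$ (except one which is $b$ itself), the element $b$ already lies in $\St_K(1)$ and its image in $\St_{K_2}(1)$ is, via the injection $\varphi$ of Lemma \ref{L11}, a vector in $(\mathbb{F}_p)^p$ — hence has order dividing $p$ in $K_2$. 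So the only thing to check is that an arbitrary product of the generators has order dividing $p$ in $K_2$; since $\St_{K_2}(1)$ is elementary abelian of exponent $p$ (being isomorphic to a subspace of $(\mathbb{F}_p)^p$ by Lemma \ref{L11}) and $K_2 / \St_{K_2}(1) \cong \mathbb{F}_p$, every element $g \in K_2$ can be written as $a^r h$ with $h \in \St_{K_2}(1)$, and it suffices to show $(a^r h)^p = 1$ in $K_2$.

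The key computation uses periodicity. By (\ref{EQpth_powers}), if $g = a^r h$ with $h \in \St_K(1)$ and $\psi(h) = (h_1, \dots, h_p)$, then (after passing to the appropriate power so that $a^r$ becomes a full $p$-cycle, i.e. when $r \not\equiv 0$) the first component of $\psi(g^p)$ is, up to the action of a power of $a$ by (\ref{EQconjugatebya}), a product of the $h_i$ in some cyclic order. When $h$ is a generator $b \in \mathcal{B}^{(j)}$, this product is $a^{e_{p-j+1}} \cdots a^{e_{p-1}} \cdot b \cdot a^{e_1} \cdots a^{e_{p-j}}$, which equals $a^{e_1 + \dots + e_{p-1}} \cdot b^{a^{m}}$ for some exponent $m$. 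Here periodicity enters decisively: since $K$ is periodic, Lemma 3.13 of \cite{EGS} gives $e_1 + \dots + e_{p-1} = 0$, so this first component reduces to $b^{a^m} \in \St_K(1)$. More generally, when $h$ is a product of generators, the first component of $\psi(g^p)$ lies in $\St_K(1)$: the contributions of the $a^{e_i}$ labels telescope to a power of $a$ whose exponent is a sum of terms each of which is some $e_1 + \dots + e_{p-1} = 0$ (after carefully tracking the cyclic shifts). Hence $\psi(g^p)$ has all components in $\St_K(1)$, which by Lemma \ref{L21} means $g^p \in \St_K(2)$, i.e. $g^p = 1$ in $K_2$. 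The case $r \equiv 0$ is immediate since then $g = h \in \St_{K_2}(1)$ already has order dividing $p$.

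The main obstacle I expect is the bookkeeping in the general case: when $h$ is not a single generator but an arbitrary word in the $b$'s (conjugated by powers of $a$), one must verify that the first component of $\psi(g^p)$ — a long alternating product of $b$-type sections and $a^{e_i}$-type sections in some permuted order — has its total $a$-exponent equal to zero, invoking $e_1 + \dots + e_{p-1} = 0$ repeatedly and handling the cyclic reindexing from (\ref{EQpth_powers}) and (\ref{EQconjugatebya}) correctly. A cleaner route that sidesteps much of this: show directly that $\St_{K_2}(1)$, which is the image of $\langle \Delta \rangle$ from the proof of Lemma \ref{L11}, is \emph{central} in $K_2$ (it is elementary abelian and normalized by $a$, and one checks $a$ acts trivially on it modulo $\St_K(2)$ precisely because the $a$-exponents of sections sum to zero under periodicity), whence $K_2$ is a central extension of $\mathbb{F}_p$ by an elementary abelian $p$-group; such a group has exponent $p$ as long as the extension-cocycle obstruction vanishes, which again comes down to the same vanishing sum $e_1 + \dots + e_{p-1} = 0$. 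Either way, the periodicity hypothesis is used exactly once, in the form $E^{(j)} \subseteq V$ for all $j$, to kill the $a$-exponent that would otherwise obstruct $\exp(K_2) = p$.
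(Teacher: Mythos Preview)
Your first approach is correct, and the ``bookkeeping'' worry dissolves once you observe that the map sending $h \in \St_K(1)$ to the sum in $K_1 \cong \mathbb{F}_p$ of the images of its first-level sections $h_1, \ldots, h_p$ is a group homomorphism (the target being abelian). This homomorphism vanishes on each conjugate $b^{a^l}$ of a directed generator, since its sections contribute $e_1 + \cdots + e_{p-1} + 0 = 0$; as these conjugates generate $\St_K(1)$, the map is identically zero, so every component of $\psi((a^r h)^p)$ lies in $\St_K(1)$ and $(a^r h)^p \in \St_K(2)$. Even more directly: working inside $K_2$, where $N = \St_{K_2}(1)$ is abelian as you already noted, one has $(a^r h)^p = \prod_{k=0}^{p-1} h^{a^{kr}}$ computed in $N$, and under the embedding $\varphi$ into $(\mathbb{F}_p)^p$ this is the sum of all cyclic shifts of $\varphi(h)$, namely the constant vector with entries $\sum_i \varphi(h)_i = 0$ by periodicity.

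Your proposed ``cleaner route'' is wrong, however: $\St_{K_2}(1)$ is \emph{not} central in $K_2$. Conjugation by $a$ acts on $\varphi(\St_{K_2}(1)) \subseteq (\mathbb{F}_p)^p$ as a cyclic permutation of coordinates, and for any nonzero directed generator $b$ the vector $\varphi(b)$ is nonconstant (it has both a zero entry and a nonzero entry), so $\varphi(b^a) \neq \varphi(b)$ and hence $[a,b] \neq 1$ in $K_2$.

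For comparison, the paper's own proof avoids any direct computation. It uses Lemma~\ref{L11} together with periodicity (in the form $\bigcup_j \overline{E^{(j)}} \subseteq U$, whence $t \le p-1$) to bound $|K_2| \le p^p$, and then invokes the theory of regular $p$-groups: any $p$-group of order at most $p^p$ is regular, and a regular $p$-group generated by elements of order $p$ has exponent $p$. This is shorter but imports a nontrivial piece of finite $p$-group theory; your direct computation, once the homomorphism observation above is made explicit, is more elementary and self-contained.
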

\noindent \textit{Proof.}
Since $K$ is periodic, keeping as usual the notation of Definition \ref{D1}, we have that $\bigcup_{j=1}^{p} \overline{E^{(j)}}$ is contained in $U= \{(f_1,\dots,f_{p})\in (\mathbb{F}_{p})^{p}:f_1 + \dots + f_{p}=0\}$.
Since $U$ is a vector space of dimension $p-1$, Lemma \ref{L11} yields that $|K_2|\leq p^p$. This implies, by Theorem 2.8 of \cite{Reg}, that $K_2$ is a regular $p$-group and, being generated by elements of order $p$, it must have exponent $p$ by Corollary 2.11 of \cite{Reg}. \qed

\vspace{10 pt}

Since $\exp (K_2) =p$ and $K$ is self-similar, Lemma \ref{L4} yields that $\exp (K_n)$ can grow at most by $p$ whenever $n$ increases by 2. This allows us to bound from above $\exp (K_n)$.
\begin{prop}
\label{C10}
    Let $K$ be a periodic multi-EGS-group. Then
    \begin{equation*}
        \exp(K_n)\leq p^{\lfloor \frac{n+1}{2} \rfloor}
    \end{equation*}
    for every $n\ge 1$.
\end{prop}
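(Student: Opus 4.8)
The plan is to proceed by induction on $n$, lowering the index by $2$ at each step. This is possible precisely because $\exp(K_2)=p$ by Lemma \ref{L22} and because multi-EGS-groups are self-similar, so that Lemma \ref{L4} applies.

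For the base cases I would record $\exp(K_1)=p$ and $\exp(K_2)=p$. The first holds because $K=\langle a\rangle\ltimes \St_K(1)$ gives $K_1\simeq\langle a\rangle$, a group of order $p$ (alternatively it follows at once from (\ref{EQtrivialbound}) together with $a\notin\St(1)$); and $p=p^{\lfloor(1+1)/2\rfloor}$, so the bound holds for $n=1$. The second is exactly Lemma \ref{L22}, and $p=p^{\lfloor(2+1)/2\rfloor}$, so the bound holds for $n=2$. Both of these cases are needed, since the inductive step will decrease $n$ by $2$.

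For the inductive step, let $n\ge 3$ and assume the bound for all smaller indices. Since $K$ is self-similar, Lemma \ref{L4} with the splitting $n=(n-2)+2$ gives $\exp(K_n)\le \exp(K_{n-2})\cdot\exp(K_2)=p\cdot\exp(K_{n-2})$. By the induction hypothesis $\exp(K_{n-2})\le p^{\lfloor(n-1)/2\rfloor}$, hence $\exp(K_n)\le p^{\lfloor(n-1)/2\rfloor+1}$, and one finishes with the elementary identity $\lfloor(n-1)/2\rfloor+1=\lfloor(n+1)/2\rfloor$, valid for every integer $n$.

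I do not expect a genuine obstacle here: all the substance has already been extracted, on one hand in Lemma \ref{L22} (where periodicity forces $|K_2|\le p^p$, hence $K_2$ regular and of exponent $p$), and on the other hand in the submultiplicative estimate of Lemma \ref{L4} for exponents of congruence quotients of self-similar groups. The only point requiring a little care is to keep track of the floor function correctly and to include both small base cases $n=1$ and $n=2$ so that the step-$2$ induction is properly anchored.
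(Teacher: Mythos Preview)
Your argument is correct and is exactly the approach the paper intends: it derives Proposition~\ref{C10} from Lemma~\ref{L22} together with the submultiplicative bound of Lemma~\ref{L4}, noting that $\exp(K_n)$ can grow by at most a factor of $p$ when $n$ increases by $2$. You have simply made the induction and the two base cases explicit, which is entirely appropriate.
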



Analogously to the non-periodic case, our strategy to bound $\exp (K_n)$ from below will be to do that when $K$ is a GGS-group and then to look for a suitable GGS-group inside any multi-EGS-group.
The most noteworthy part of our proof comes with the next lemma.
\begin{lemma}
    \label{L5}
    Assume $G$ to be a periodic GGS-group and let $n\ge 1$ be an integer.
    Then
    \begin{equation*}
        \exp(G_{n+2}) \geq \exp(G_{n}) \cdot p
    \end{equation*}
\end{lemma}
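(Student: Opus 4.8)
The goal is to show that for a periodic GGS-group $G$, passing from level $n$ to level $n+2$ multiplies the exponent by at least $p$. The natural strategy, mirroring the non-periodic Lemma \ref{P}, is to exhibit an explicit element $g\in G$ whose order in $G_{n+2}$ is at least $p$ times its order in $G_n$, and to do so by tracking a section of $g^p$ (or of a suitable power) down two levels of the tree. Concretely, I would start from an element $g$ realizing $\exp(G_n)$ — say $g$ has order exactly $\exp(G_n)=p^m$ in $G_n$ — and try to build from it an element $\tilde g\in G$ whose $p$-th power, after applying $\psi$ twice and projecting onto a well-chosen coordinate $u\in X^2$, returns an element conjugate (in $\Aut\mathcal T$, or better in $G$) to $g$ itself. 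Then $(\tilde g)^p$ has a section at level $2$ of order $\ge p^m$ in $G_n$, so by (the level-$2$ analogue of) \eqref{EQ1} — i.e. the injectivity in Lemma \ref{L21} with $k=2$ — $(\tilde g)^p$ has order $\ge p^m$ in $G_{n+2}$, whence $\tilde g$ has order $\ge p^{m+1}=\exp(G_n)\cdot p$ in $G_{n+2}$.

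**Finding the right element.** The key computational input is \eqref{EQpth_powers} together with \eqref{Z1}: for $g\in\St(1)$ with $\psi(g)=(g_1,\dots,g_p)$, the coordinates of $\psi((ag)^p)$ are the $p$ cyclic rotations of the product $g_1g_2\cdots g_p$. Taking $g$ to be a product of the $c_i$'s (as in Lemma \ref{P}), one coordinate of $\psi((a^x c_j)^p)$ had the form $a^x c_s$; the periodicity hypothesis $e_1+\dots+e_{p-1}=0$ forces $x=0$, i.e. that coordinate lies in $\St(1)$ and equals $c_s$ for some $s$ — which is exactly why in the periodic case one gains only one factor of $p$ per \emph{two} levels rather than per level. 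So I would take $g=c_s$ (a section, at level $1$, of an appropriate element of the form $(ac_j)^p$), then recurse: $c_s\in\St(1)$ with $\psi(c_s)$ given by one of the lines of \eqref{Z1}, and I need to relate a further section of $c_s$ — or rather of some power/product built at the next level — back to an element realizing $\exp(G_n)$. The honest bookkeeping is: build $g_{n+2}\in G$ with $g_{n+2}=a^{x_0}(\text{word in the }c_i)$, show $(g_{n+2})^p$ has a level-$1$ section of the form $c_s$, show that this $c_s$ in turn has a level-$1$ section (i.e. a level-$2$ section of $g_{n+2}$) which is, up to a permissible automorphism, an element $g_n$ of maximal order in $G_n$; then iterate the order estimate.

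**The main obstacle.** The delicate point is the matching between the element whose order we want to \emph{inherit} at level $n$ and the section we actually \emph{produce} two levels down. In Lemma \ref{P} the induction was clean because the relevant coordinate $a^x c_s$ was visibly again of the allowed form and the inductive statement covered all such elements uniformly. Here the analogous uniform family at level $2$ is less transparent: after periodicity kills the $a^x$ term, the section $c_s$ sits inside $\St(1)$, and I must descend once more to see a nontrivial contribution — but $\psi(c_s)$ has $p-1$ of its coordinates equal to powers of $a$ (order $\le p$) and only one coordinate equal to $c$ again. The real content is therefore to choose the word in the $c_i$ at each stage so that the cyclically-shifted products in \eqref{EQpth_powers} produce, after \emph{two} applications of $\psi$, a section conjugate to the previous stage's element — and to set up an induction hypothesis broad enough (covering a whole orbit of elements under conjugation by the $f$-type automorphisms used in Theorem \ref{THMmain_nonperiodic}, or under the $a$-action) that this descent closes. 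I expect most of the proof's length to go into pinning down this induction statement and verifying, via \eqref{EQconjugatebya}, \eqref{EQpth_powers}, \eqref{Z1} and possibly a commutator identity, that the two-step section lands where we need it; once that is in place, the order estimate via Lemma \ref{L21} is immediate.
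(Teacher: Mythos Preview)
Your high-level strategy --- take $g$ of maximal order $p^k$ in $G_n$ and construct $\tilde g$ whose $p$-th power has, two levels down, a section equal (or conjugate) to $g$ --- is exactly the right shape. But your concrete execution has a genuine gap. You propose that the level-$1$ section of $\tilde g^{\,p}$ be a single $c_s$, and that a further section of $c_s$ recover the element $g_n$ of maximal order. This cannot work: by \eqref{Z1} the level-$1$ sections of any $c_s$ are precisely $c$ and the powers $a^{e_i}$, so the only candidates for your $g_n$ are $c$ or some $a^{e_i}$, and you have no control over their orders in $G_n$ (certainly no reason they realise $\exp(G_n)$). Tracking sections of elements of the form $a c_j$ downward, as in Lemma~\ref{P}, is too rigid here: the family you land in after two descents is not rich enough to contain an \emph{arbitrary} prescribed $g$, so no uniform induction of the Lemma~\ref{P} type can close.

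The paper reverses the flow of information. Given $g$ of order $p^k$ in $G_n$, it first invokes the surjectivity of $\psi\pi_1:\St_G(1)\to G$ (a nontrivial result from \cite{AG}) to produce $h\in\St_G(1)$ with $\psi\pi_1(h)=g$; then $h$ already has order $\ge p^k$ in $G_{n+1}$. Writing $h=c_{j_1}\cdots c_{j_r}$ and normalising so that $e_1=1$, the paper \emph{constructs} an explicit second lift $f_1=c_1^{(c^{j_1})}\cdots c_1^{(c^{j_r})}$, engineered so that $\psi\pi_1(f_1)=h$. The remaining coordinates of $\psi(f_1)$ are powers of $a$ except the last, which equals $a^{re_{p-1}}$ times an unwanted factor $y\in\gamma_2(G)$. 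Now external branching results from \cite{AG} are decisive: for non-symmetric $\mathbf e$ one has $\gamma_2(G)\times\overset{p}{\cdots}\times\gamma_2(G)\subseteq\psi(\St_G(1))$, so $y$ can be cancelled by some $f_2\in\St_G(1)$, after which periodicity ($\sum e_i=0$) gives $\psi\pi_p\big((af_1f_2)^p\big)=h$ via \eqref{EQpth_powers}. For symmetric $\mathbf e$ only $\gamma_3(G)\times\overset{p}{\cdots}\times\gamma_3(G)\subseteq\psi(\St_G(1))$ is available, and a further trick is needed: one first modifies $h$ so that $r\equiv j_1+\dots+j_r\pmod p$, then uses $[c_0,c_1]^r$ as an explicit corrector to reduce the error from $\gamma_2$ to $\gamma_3$. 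None of these ingredients --- the surjectivity lift, the explicit construction of $f_1$, the branching containments, or the symmetric/non-symmetric case split --- appears in your plan, and without them the ``delicate matching'' you correctly flag cannot be carried out.
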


\noindent \textit{Proof.}
We deduce from (\ref{EQtrivialbound}) that $\exp(G_n) = p^k$ for some $k\geq 1$; then there is $g\in G$ which has order $p^k$ in $G_n$. In order to prove the claim, it is enough to find an element of $G$ that has order $ \geq p^{k+1}$ in $G_{n+2}$.

By Proposition 2.2 in \cite{AG}, the composition $\psi \pi_i : \St_{G}(1) \longrightarrow G$ is a surjective homomorphism.
Hence there is $h\in \St_{G}(1)$ such that $\psi \pi_1 (h) = g$ and, by (\ref{EQ1}), the order of $h$ in $G_{n+1}$ is $\geq p^k$.

Since $h \in \St_{G}(1)= \langle c_0,\dots,c_{p-1} \rangle$ (see the last part of the proof of Lemma \ref{L11}) and $c_0,\dots,c_{p-1}$ have order $p$, we can write
\begin{equation*}
    h= c_{j_1}\dots c_{j_{r}}
\end{equation*}
for some $r\geq 1$ and $j_1,\dots,j_{r} \in \{0,\dots,p-1 \}$. By Theorem 2.16 in \cite{AG}, we may assume $e_1=1$, up to replacing $G$ with a GGS-group that is conjugate to $G$ (obviously the periodicity of the group and the exponents of congruence quotients do not change if we replace $G$ with a conjugate of $G$). Then we define the element
\begin{equation*}
    f_1={c_1}^{({c}^{j_1})} {c_1}^{({c}^{j_2})}\dots {c_1}^{({c}^{j_r})}.
\end{equation*}
Observe that for any integer $\lambda$
\begin{align*}
    \psi({c_1}^{({c}^{\lambda})})&=
    (c,a,\dots,a^{e_{p-2}},a^{e_{p-1}})^{(a^{\lambda},a^{\lambda e_2},\dots,a^{\lambda e_{p-1}},c^{\lambda})}
    \\
    &= (c_{\lambda},a,\dots,a^{e_{p-2}},{(a^{e_{p-1}})}^{(c^{\lambda})})
\end{align*}
and then
\begin{equation*}
    \psi(f_1)=(h,a^r,\dots,a^{r e_{p-2}},F)
\end{equation*}
where
\begin{align*}
    F&= (a^{e_{p-1}})^{(c^{j_1})}\dots (a^{e_{p-1}})^{(c^{j_r})}
    \\
    &=a^{e_{p-1}} [a^{e_{p-1}},c^{j_1}]\dots a^{e_{p-1}} [a^{e_{p-1}},c^{j_r}]
    \\
    &= a^{r e_{p-1}} [a^{e_{p-1}},c^{j_1}]^{(a^{(r-1)e_{p-1}})} [a^{e_{p-1}},c^{j_2}]^{(a^{(r-2)e_{p-1}})} \dots [a^{e_{p-1}},c^{j_r}].
\end{align*}
We set $y=[a^{e_{p-1}},c^{j_1}]^{(a^{(r-1)e_{p-1}})} [a^{e_{p-1}},c^{j_2}]^{(a^{(r-2)e_{p-1}})} \dots [a^{e_{p-1}},c^{j_r}]$, so that $F=a^{r e_{p-1}} y$.

At this point, it is convenient to consider the case in which $\mathbf{e}$ is non-symmetric and the case in which $\mathbf{e}$ is symmetric separately. In both cases $\mathbf{e}\in (\mathbb{F}_p)^{p-1} \setminus \{\mathbf{0}\}$ cannot be constant (because the sum of its components is 0 by the periodicity of $G$) and in particular $p \neq 2$.

Assume first $\mathbf{e}$ non-symmetric.
In this case, Lemma 3.4 of \cite{AG} yields that $\gamma_2(G) \times \overset{p}{\dots} \times \gamma_2(G) \subseteq \psi(\St_G(1))$.
Then there exists $f_2\in \St_G(1)$ such that $\psi(f_2)=(1,\dots,1,y^{-1})$ and we get 
\begin{equation*}
    \psi(f_1 f_2) = (h,a^r,a^{r e_2},\dots,a^{r e_{p-2}},a^{r e_{p-1}}).
\end{equation*}
Since $G$ is periodic we have $e_1+e_2 +\dots+e_{p-1} = 0$ and (\ref{EQpth_powers}) yields that
\begin{equation*}
    \psi \pi_{p} ((a f_1 f_2)^p) = h \cdot a^{r(e_1+e_2+\dots+e_{p-1})}=h.
\end{equation*}
This implies, by (\ref{EQ1}), that $a f_1 f_2$ has order $\geq p^{k+1}$ in $G_{n+2}$, as desired.

Let us now consider the case in which $\mathbf{e}$ is symmetric. 
Since we are assuming $e_1=1$, we have $e_{p-1}=1$.

Recall that we set $h=c_{j_1}\dots c_{j_r}$ and the only assumption we made on $h$ is $\psi\pi_1(h)=g$.
We want to show that, thanks to the symmetry of $\mathbf{e}$, we may assume $r\equiv j_1+\dots+j_r \text{ (mod }p)$.
Since $2$ is invertible in $\mathbb{F}_p$ (remember that $p\neq 2$), there is an integer $\alpha$ such that $2 \alpha \equiv r - (j_1+\dots + j_r) \text{ (mod }p)$. Then the element 
\begin{equation*}
    h' = ({c_0}^{p-1} c_2)^{\alpha} h 
\end{equation*}
can be written in the form $h' = c_{l_1}\dots c_{l_s}$, with $s = p\alpha +r \equiv r $ and $l_1+\dots+l_s \equiv 2\alpha + j_1 +\dots+j_r \equiv  r \text{ (mod }p)$. Moreover, remembering that $e_1=e_{p-1}=1$ and using (\ref{Z1}), we have $\psi \pi_1 (h') = (a^{p-1}a)^{\alpha} \psi \pi_1 (h)=g$. Then $h'= c_{l_1}\dots c_{l_s}$ satisfies $s\equiv l_1+\dots+l_s \text{ (mod }p)$ and $\psi \pi_1 (h') = g$.
This allows us to suppose, from now on, that $r\equiv j_1+\dots+j_r \text{ (mod }p)$.

We have
\begin{align*}
    y &= [a,c^{j_1}]^{(a^{r-1})} [a,c^{j_2}]^{(a^{r-2})} \dots [a,c^{j_r}] 
    \\
    &\equiv [a,c^{j_1}] [a,c^{j_2}] \dots [a,c^{j_r}]
    \\
    &\equiv [a,c]^{j_1+\dots+j_r}
    \\
    &\equiv [a,c]^r \text{ (mod } \gamma_3(G) \text{)},
\end{align*}
where the last congruence holds because $[a,c]$ has order $p$ in the quotient $\gamma_2(G) / \gamma_3(G)$ (see Theorem 2.1 in \cite{AG}).
This yields $y=z \cdot [a,c]^r$ for some $z \in \gamma_3(G)$. Working again modulo $\gamma_3(G)$, we get 
\begin{align*}
    [a,h]^{(a^{-1})} &\equiv [a,c_{j_1}\dots c_{j_r}]
    \\
    &\equiv [a,c]^{(a^{j_1})}\dots [a,c]^{(a^{j_r})}
    \\
    &\equiv [a,c]^r\text{ (mod } \gamma_3(G) \text{)}
\end{align*}
and $[a,c]^r=[a,h]^{(a^{-1})}\cdot w$ for some $w\in \gamma_3(G)$.
Now, since $\mathbf{e}$ is non-constant, Lemma 3.2 of \cite{AG} ensures that $\gamma_3(G) \times \overset{p}{\dots} \times \gamma_3(G) \subseteq \psi(\St_G(1))$.
Then there must be $f_3\in \St_{G}(1)$ with $\psi(f_3) = (w^{-1},1,\dots,1,z^{-1})$ and
\begin{align*}
    \psi (f_1 [c_0,c_1]^r f_3)&= (h, a^{r },\dots,a^{r e_{p-2}},a^{r } \cdot y)([a,c]^r,1,\dots,1,[c,a]^r)(w^{-1},1,\dots,1,z^{-1})
    \\
    &= (h\cdot [a,h]^{(a^{-1})}, a^{r },\dots,a^{r e_{p-2}},a^r)
    \\
    &= (h^{(a^{-1})}, a^{r e_1},\dots,a^{r e_{p-2}},a^{r e_{p-1}}),
\end{align*}
where the last equality holds by the commutator identity $[a,h]^{(a^{-1})}= [h,a^{-1}]$.
This implies, analogously to the previous case, that
\begin{equation*}
    \psi \pi_{p} ((a f_1 [c_0,c_1]^r f_3)^p) = h^{(a^{-1})}
\end{equation*}
and that $a f_1 [c_0,c_1]^r f_3$ has order $\geq p^{k+1}$ in $G_{n+2}$. \qed

\vspace{10 pt}

Now the next proposition follows immediately by induction on $n$.
\begin{prop}
\label{C12}
Let $G$ be a periodic GGS-group. Then
    \begin{equation*}
        \exp(G_n) = p^{\lfloor \frac{n+1}{2} \rfloor} 
    \end{equation*}
    for every $n\ge 1$.
\end{prop}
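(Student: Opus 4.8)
The plan is to prove Proposition \ref{C12} by combining the upper bound already obtained in Proposition \ref{C10} with a matching lower bound extracted from Lemma \ref{L5}, using induction on $n$ with two base cases (for $n=1$ and $n=2$) so that the step $n\mapsto n+2$ of Lemma \ref{L5} reaches every integer.

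\medskip

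First I would record the upper bound: since $G$ is in particular a periodic multi-EGS-group, Proposition \ref{C10} gives $\exp(G_n)\le p^{\lfloor (n+1)/2\rfloor}$ for all $n\ge 1$. It therefore remains to prove the reverse inequality $\exp(G_n)\ge p^{\lfloor (n+1)/2\rfloor}$. For $n=1$ we have $G_1\simeq\langle a\rangle$ of order $p=p^{\lfloor 1\rfloor}$, so $\exp(G_1)=p$. For $n=2$, Lemma \ref{L5} (with $n$ replaced by $0$ there, or simply the observation that $G_2$ contains the image of $a$, which has order $p$) gives $\exp(G_2)\ge p=p^{\lfloor 3/2\rfloor}$; alternatively $\exp(G_2)\ge\exp(G_1)=p$ directly since $G_1$ is a quotient of $G_2$. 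These settle the two base cases $n=1,2$, which have $\lfloor (n+1)/2\rfloor=1$.

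\medskip

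Now suppose $n\ge 3$ and that the lower bound holds for $n-2$, i.e.\ $\exp(G_{n-2})\ge p^{\lfloor(n-1)/2\rfloor}$. By Lemma \ref{L5} applied with the integer $n-2\ge 1$ in place of $n$, we get
\begin{equation*}
\exp(G_n)\ge \exp(G_{n-2})\cdot p\ge p^{\lfloor (n-1)/2\rfloor+1}=p^{\lfloor (n+1)/2\rfloor},
\end{equation*}
using the elementary identity $\lfloor (n-1)/2\rfloor+1=\lfloor(n+1)/2\rfloor$. This completes the induction and hence the proof.

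\medskip

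There is essentially no obstacle here: all the work has been done in Lemma \ref{L5} (the delicate construction of an element of order $p^{k+1}$ in $G_{n+2}$, split into the symmetric and non-symmetric cases) and in Proposition \ref{C10} (the upper bound via regularity of $K_2$ and Lemma \ref{L4}). The only mild point of care is the bookkeeping with the floor function and the choice of \emph{two} base cases so that the step size $2$ in Lemma \ref{L5} covers all $n$; both the even and odd residues are handled because $\lfloor(n+1)/2\rfloor$ increases by exactly $1$ each time $n$ increases by $2$.
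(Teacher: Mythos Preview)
Your argument is correct and is exactly the induction the paper has in mind: combine the upper bound from Proposition \ref{C10} with the step $\exp(G_{n+2})\ge p\cdot\exp(G_n)$ from Lemma \ref{L5}, using the two base cases $n=1,2$. The only cosmetic remark is that Lemma \ref{L5} is stated for $n\ge 1$, so you should not invoke it with $n=0$ for the base case $n=2$; your alternative (that $G_1$ is a quotient of $G_2$, hence $\exp(G_2)\ge p$) is the clean way to handle this.
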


To prove Theorem \ref{mainthm}, we are only left to extend Proposition \ref{C12} to any periodic multi-EGS-group.

\begin{theorem}
    Let $K$ be a periodic multi-EGS-group. Then
    \begin{equation*}
        \exp (K_n) = p^{\lfloor \frac{n+1}{2} \rfloor}
    \end{equation*}
    for every $n\ge 1$.
\end{theorem}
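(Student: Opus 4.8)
The plan is to deduce the general periodic case from the GGS-case (Proposition \ref{C12}) by the same device already used in the non-periodic setting: locate inside an arbitrary periodic multi-EGS-group $K$ a subgroup which is conjugate to a periodic GGS-group, so that its $n$th congruence quotient embeds in $K_n$ and forces $\exp(K_n)\ge p^{\lfloor (n+1)/2\rfloor}$; the reverse inequality is already Proposition \ref{C10}.

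First I would invoke the definition of periodicity: since $K$ is a nontrivial multi-EGS-group, not all $E^{(j)}$ are zero, so pick $j$ with $E^{(j)}\neq\{\mathbf 0\}$ and a nonzero vector $\mathbf e=(e_1,\dots,e_{p-1})\in E^{(j)}$. Because $K$ is periodic, $\mathbf e\in V$, i.e.\ $e_1+\dots+e_{p-1}=0$. Next I would reuse verbatim the conjugation computation from the proof of Theorem \ref{THMmain_nonperiodic}: with $f=xa^{p-j}$ where $\psi(x)=(f,\dots,f)$, one has $f$ commuting with $a$ and $(H^{(j)}_{\mathbf e})^f=L$, where $H^{(j)}_{\mathbf e}=\langle a,b\rangle$ for $b$ the directed automorphism along $P_j$ with vector $\mathbf e$, and $L=\langle a,d\rangle$ is the GGS-group with defining vector $\mathbf e$ (directed along $P_p$). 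Since $\mathbf e$ lies in $V$, $L$ is periodic; since $\mathbf e\neq\mathbf 0$, $L$ is infinite, hence a genuine periodic GGS-group to which Proposition \ref{C12} applies.

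Then I would conclude: conjugation by $f$ is an automorphism of $\Aut\mathcal T$ preserving each $\St(n)$, so it induces an isomorphism $(H^{(j)}_{\mathbf e})_n\simeq L_n$ for every $n$; since $H^{(j)}_{\mathbf e}\le K$, the quotient $(H^{(j)}_{\mathbf e})_n$ embeds naturally in $K_n$ (as noted after \eqref{EQlimiteG_n}), whence
\begin{equation*}
    \exp(K_n)\ge \exp\big((H^{(j)}_{\mathbf e})_n\big)=\exp(L_n)=p^{\lfloor \frac{n+1}{2}\rfloor}.
\end{equation*}
Combined with Proposition \ref{C10} this gives equality for all $n\ge 1$.

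I do not expect a real obstacle here; this theorem is essentially a packaging step, and the entire content has been front-loaded into Lemma \ref{L5}/Proposition \ref{C12} (the lower bound for periodic GGS-groups) and into the conjugacy argument already carried out in Section 2. The only point requiring a line of care is checking that the conjugating automorphism $f$ fixes the stabilizer filtration so that congruence quotients are transported isomorphically — but this is immediate since $f\in\Aut\mathcal T$ and $\St(n)$ is a characteristic subgroup of $\Aut\mathcal T$, being the (pointwise) stabilizer of level $n$. Hence the proof is short and the difficulty, such as it is, lies entirely upstream.
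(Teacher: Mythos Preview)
Your proposal is correct and follows essentially the same approach as the paper: pick a nonzero $\mathbf e\in E^{(j)}\subseteq V$, use the conjugation from Theorem \ref{THMmain_nonperiodic} to identify $H^{(j)}_{\mathbf e}$ with a periodic GGS-group, and combine Proposition \ref{C12} with Proposition \ref{C10}. One tiny remark: for the transport of congruence quotients you only need that $\St(n)$ is \emph{normal} in $\Aut\mathcal T$ (it is the kernel of restriction to $\mathcal T_n$), not that it is characteristic.
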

\noindent \textit{Proof.}
The proof follows the same line of the proof of Theorem \ref{THMmain_nonperiodic}. The only difference is that now $K$ is periodic, hence $  E^{(j)} \subseteq  V$ for every $j\in \{1,\dots ,p\}$ and, since $E^{(1)},\dots ,E^{(p)} $ are not all equal to the null subspace, we can pick $\mathbf{e}\in \bigcup_{j=1}^{p} E^{(j)} \setminus \{\mathbf{0}\} \subseteq V$. It follows that $H_{\mathbf{e}}^{(j)}$ is conjugate to a periodic GGS-group and the claim follows by Proposition \ref{C12} and Proposition \ref{C10}. \qed




\begin{thebibliography}{99}

\bibitem{Reg}
G. A. Fern\'{a}ndez-Alcober,
\textit{An introduction to finite {$p$}-groups: regular {$p$}-groups and groups of maximal class},
Mat. Contemp. \textbf{20} (2001), 155--226. MR1868828

\bibitem{AG}
G. A. Fern\'{a}ndez-Alcober and A. Zugadi-Reizabal,
\textit{G{GS}-groups: order of congruence quotients and {H}ausdorff dimension},
Trans. Amer. Math. Soc. \textbf{366} (2014), no. 4, 1993--2017, DOI 10.1090/S0002-9947-2013-05908-9. MR3152720

\bibitem{Grigorchuk}
R. I. Grigor\v{c}uk,
\textit{On Burnside's problem on periodic groups},
{Funktsional. Anal. i Prilozhen.} \textbf{14} (1980), no. 1, 53--54. MR565099

\bibitem{GuptaSidki}
N. Gupta and S. Sidki,
\textit{On the Burnside problem for periodic groups},
{Math. Z.} \textbf{182} (1983), no. 3, 385--388, DOI 10.1007/BF01179757. MR696534

\bibitem{EGS}
A. Thillaisundaram and J. Uria-Albizuri,
\textit{The profinite completion of multi-{EGS} groups},
J. Group Theory \textbf{24} (2021), no. 2, 321--357, DOI 10.1515/jgth-2019-0155. MR4223850

\bibitem{Infinite}
T. Vovkivsky,
\textit{Infinite torsion groups arising as generalizations of the second {G}rigorchuk group},
Algebra ({M}oscow, 1998), de Gruyter, Berlin, 2000, pp. 357--377. MR1754681

\bibitem{ZelmanovODD}
E. I. Zelmanov,
\textit{Solution of the restricted {B}urnside problem for groups of odd exponent},
Izv. Akad. Nauk SSSR Ser. Mat. \textbf{54} (1990), no. 1, 42--59, 221. MR1044047

\bibitem{Zelmanov2}
E. I. Zelmanov,
\textit{Solution of the restricted {B}urnside problem for {$2$}-groups},
Mat. Sb. \textbf{182} (1991), no. 4, 568--592. MR1119009
    
\end{thebibliography}
\end{document}